\newcommand{\QED}{$\sq$\vskip.3cm}
\newcommand{\C}{\mathbb{C}}
\newcommand{\R}{\mathbb{R}}
\newcommand{\N}{\mathbb{N}}
\newcommand{\Z}{\mathbb{Z}}
\newcommand{\OD}{\mathbb{D}}
\newcommand{\D}{\overline{\mathbb{D}}}
\def\rd{{\rm d}}
\newtheorem{theorem}{Theorem}[section]
\newtheorem{corollary}[theorem]{Corollary}
\newtheorem{lemma}[theorem]{Lemma}
\newtheorem{proposition}[theorem]{Proposition}
\theoremstyle{definition}
\newtheorem{example}[theorem]{Example}
\def\sskip{\vskip 0.3cm}
\begin{document}

\keywords{endomorphism, semiprime, semisimple, commutative Banach algebra}
\mathclass{Primary 46J05; Secondary 46J45, 46J10.}
\abbrevauthors{J. F. Feinstein and H. Kamowitz}
\abbrevtitle{Endomorphisms of semiprime Banach algebras}

\title{Quasicompact endomorphisms of\\commutative semiprime Banach algebras}

\author{Joel F. Feinstein}
\address{School of Mathematical Sciences, University of Nottingham\\University Park, Nottingham~NG7~2RD, UK\\
E-mail: Joel.Feinstein@nottingham.ac.uk}

\author{Herbert Kamowitz}
\address{Department of Mathematics, University of Massachusetts at Boston\\
100 Morrissey Boulevard, Boston, MA 02125-3393, USA\\
E-mail: hkamo@cs.umb.edu}

\maketitlebcp

\begin{abstract}
This paper is a continuation of our study of  compact,
power compact,  Riesz, and  quasicompact endomorphisms of  commutative
Banach algebras. Previously it has been shown that if $B$ is a
unital commutative
semisimple Banach algebra with connected character space, and $T$ is a
unital endomorphism of $B$, then $T$ is quasicompact if and only if the
operators $T^n$ converge in operator norm to a rank-one unital endomorphism
of $B$.

In this note the discussion is extended in two ways: we discuss endomorphisms of commutative Banach algebras which are semiprime and not necessarily semisimple; we also discuss commutative Banach algebras with character spaces which are not necessarily connected.

In previous papers we have given examples of commutative semisimple Banach
algebras $B$ and
endomorphisms $T$ of $B$ showing that $T$ may be quasicompact but not Riesz, $T$ may be Riesz
but not power compact, and $T$ may be
power compact but not compact. In this note we give examples of commutative, semiprime Banach
algebras, some radical and some semisimple, for which every quasicompact endomorphism is actually
compact.

\end{abstract}

\section{Introduction}
Let $A$ be a commutative, complex Banach algebra. We denote by $\Phi_A$ the character space of $A$, and, for $a \in A$, we denote by $\hat a$ the Gelfand transform of $a$. As in \cite{dales}, if $A$ has no identity element then we denote by $A^\#$ the usual Banach algebra obtained by adjoining an identity to $A$ ; otherwise we define $A^\#=A$. We denote the open unit disc by $\OD$ and the closed unit disc by $\D$. We denote the standard disc algebra (regarded as a Banach space) by $A(\D)$.

In previous papers the authors \cite{FK3,FK4,FK5}
and others \cite{GGL1,GGL,Kl} have studied
endomorphisms of commutative semisimple Banach algebras and have obtained several
general theorems, and also a variety of results pertaining to specific classes of algebras.
In this note we extend this discussion to
endomorphisms of commutative Banach algebras which are semiprime and not necessarily semisimple.

We recall that a complex algebra $B$ is {\it semiprime} if
$J=\{0\}$ is the only ideal in $B$ such that the product of every pair of elements in $J$ is $0$.
It is standard that a commutative, complex algebra $B$ is semiprime if and only if $B$ has no
non-zero nilpotent elements (see, for example, \cite{Dix} or \cite[pp.77-78]{dales}).
Certainly semisimple algebras are semiprime.

Examples of commutative semiprime Banach  algebras which are not semisimple include certain Banach algebras of formal power series, as discussed in \cite{Grab}.
In particular, $A(\D)$ and $H^p(\OD)$ for  $p \in [1,\infty)$ are commutative
radical semiprime Banach algebras with respect to \emph{convolution}
multiplication defined by
\[(f \ast g) (z)=\int_{\gamma_z} f(z-w)g(w)\,{\rm d}w\,,\]
where the path $\gamma_z$ is a straight line joining $0$ to $z.$
Other examples of commutative radical semiprime Banach algebras include
$\ell^p(\omega)$ for $p \in [1,\infty)$ and radical weights $\omega$.\footnote{A real valued function $\omega$ on $\Z^+$ is a \emph{weight}
if $\omega(n) > 0$ for all $n \in \Z^+$ and, for all $m$ and $n$ in $\Z^+$,
we have $\omega(m + n) \leq
  \omega(m)\omega(n)$. The weight is \emph{radical} if, in
  addition, $\lim_{n \rightarrow \infty}\omega(n)^{1/n} = 0.$}

A linear map $T$ from a commutative Banach algebra $A$ to itself is an \emph{endomorphism} if
$T$ preserves multiplication. If the algebra $A$ is unital,
then an endomorphism $T$ of $A$ is
said to be \emph{unital} if $T$ maps the identity to itself.
In this case, $\phi := T^*|_{\Phi_A}$ is a selfmap of $\Phi_A$; we shall call $\phi$ the selfmap of $\Phi_A$ associated with $T$. Note that then, for all $a \in A$, we have
\[
\widehat{Ta} = \hat a \circ \phi\,.
\]
In particular, if $A$ is semisimple, then we may recover the endomorphism from the associated selfmap $\phi$. If $A$ is not semisimple, then $\phi$ may give little information about the endomorphism $T$. Even in the latter case, however, the existence or otherwise of fixed points of $\phi$ is relevant to our study of endomorphisms.

For commutative semisimple Banach
algebras, endomorphisms are automatically continuous. However, in the case of commutative semiprime algebras, this need not be the case, at least if we assume the continuum hypothesis (CH). Indeed, let $\omega$ be a radical weight on $\R^+$, and set $A=L^1(\R^+,\omega)$. Assuming CH, it follows from \cite[Theorem 5.7.31]{dales} and the comments following that theorem, that there is then a discontinuous, injective, unital endomorphism of the integral domain $A^\#$. We shall consider only bounded
endomorphisms in this note.

Let $E$ be an infinite dimensional Banach
space, let $\mathcal{L}(E)$ be the Banach algebra of bounded linear operators
on $E$, and let $\mathcal{K} (E)$ be the set of compact linear operators on $E$. Then $\mathcal{K} (E)$ is a closed ideal in $\mathcal{L}(E)$. The quotient algebra $\mathcal{L}(E)/\mathcal{K} (E)$ is called the \emph{Calkin algebra}.
Now let $T$ be a bounded linear operator on $E$.
The \emph{essential spectral radius} of $T$, $\rho_e(T)$, is the spectral radius of $T + \mathcal{K} (E)$ in the Calkin algebra.

We shall discuss operators $T$ such that $\rho_e(T) < 1.$ (This holds if and only if there is a natural number $n$ such that the distance from $T^n$ to $\mathcal{K}(E)$ is strictly less than $1$.)
Following Heuser \cite{Heu}
such an operator $T$ is called \emph{quasicompact}.
If $\rho_e(T)=0$ the operator
$T$ is a \emph{Riesz} operator. Quasicompactness is clearly weaker than Riesz,
which in turn is weaker than the condition that an operator be
power compact.

In \cite{FK5}, the authors investigated quasicompact endomorphisms of
commutative semisimple Banach algebras.
One of the main results of that paper was the following.

\begin{proposition}
\label{ssqc}
Let $B$ be a unital commutative semisimple Banach
algebra with connected character space, let $T$ be a unital
endomorphism of $B$, and let $\phi$ be the associated selfmap of $\Phi_B$.
Then $T$ is quasicompact if and only if
the operators $T^n$ converge in operator norm to a rank-one unital
endomorphism of $B$; in this case $\phi$ has a unique fixed point $x_0 \in \Phi_A$, and
the rank-one endomorphism above must be the endomorphism $b \mapsto \hat b(x_0) 1$.\QED
\end{proposition}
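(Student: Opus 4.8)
The two implications are of quite different character; I would prove them separately, noting that the reverse one is almost immediate. Suppose first that $T^n \to P$ in operator norm with $P$ a rank-one unital endomorphism. Then $P$ is compact, so $\|T^n - P\| < 1$ for large $n$ and hence the distance from $T^n$ to $\mathcal K(B)$ is less than $1$; by the criterion recalled above, $T$ is quasicompact. Since $P$ is a rank-one unital endomorphism its range is $\C 1$, so $Pb = \lambda(b)1$ for some nonzero multiplicative linear functional $\lambda$; that is, $Pb = \hat b(x_0)1$ for some $x_0 \in \Phi_B$. Letting $n \to \infty$ in $T^{n+1} = T \circ T^n$ gives $PT = P$, i.e.\ $\hat b(\phi(x_0)) = \hat b(x_0)$ for every $b \in B$, so the characters $\phi(x_0)$ and $x_0$ coincide and $x_0$ is a fixed point of $\phi$. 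For uniqueness, applying the norm-decreasing Gelfand transform to $T^n b \to P b$ shows $\hat b \circ \phi^n \to \hat b(x_0)$ uniformly on $\Phi_B$, so if $\phi(y_0) = y_0$ then $\hat b(y_0) = \hat b(\phi^n(y_0)) \to \hat b(x_0)$ for every $b$, forcing $y_0 = x_0$.

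Now assume $T$ is quasicompact, so $\rho_e(T) < 1$; by the Riesz theory of such operators, every point of $\sigma(T)$ of modulus greater than $\rho_e(T)$ is an isolated eigenvalue with finite-dimensional generalised eigenspace. I would begin by extracting elementary spectral information. Since $T1 = 1$ we have $1 \in \sigma(T)$. If $Tb = \lambda b$ with $b \neq 0$, then $\hat b \neq 0$ by semisimplicity and $\|\hat b \circ \phi^n\|_\infty = |\lambda|^n\|\hat b\|_\infty \leq \|\hat b\|_\infty$, so $|\lambda| \leq 1$; as any point of $\sigma(T)$ of modulus exceeding $1$ would be such an eigenvalue, this gives $\sigma(T) \subseteq \D$. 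If moreover $|\lambda| = 1$, then the powers $b^k$ are nonzero (as $B$ has no nonzero nilpotents) and are eigenvectors for the $\lambda^k$, so $\lambda$ must be a root of unity, since otherwise there would be infinitely many unimodular eigenvalues. Hence $\sigma(T) \cap \{|z| = 1\}$ is a finite set of roots of unity.

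The crux is the claim that $\ker(T - I) = \C 1$. The key point is structural: $\ker(T-I)$ is a \emph{subalgebra} of $B$, since $T$ is multiplicative; it contains $1$; it is finite-dimensional, because $T-I$ is Fredholm (the value $1$ lying outside the essential spectrum of $T$, as $\rho_e(T) < 1$); and it inherits from $B$ the absence of nonzero nilpotents. A finite-dimensional commutative unital algebra without nonzero nilpotents is algebraically isomorphic to $\C^d$ for some $d$, and the idempotents of such a subalgebra are idempotents of $B$; since $\Phi_B$ is connected, $B$ has only the trivial idempotents, so $d = 1$ and $\ker(T-I) = \C 1$. Applying the same conclusion to $T^m$, again a quasicompact unital endomorphism since $\rho_e(T^m) = \rho_e(T)^m < 1$, gives $\ker(T^m - I) = \C 1$ for every $m$; but an eigenvector of $T$ for a peripheral eigenvalue $\mu \neq 1$ of order $m$ would lie in $\ker(T^m - I) \setminus \C 1$, so in fact $\sigma(T) \cap \{|z| = 1\} = \{1\}$. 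Finally one must exclude a Jordan block at $1$: if the (finite-dimensional) generalised eigenspace for the eigenvalue $1$ strictly contained $\ker(T-I)$, there would be $b \in B$ with $(T-I)b$ equal to a nonzero scalar multiple of $1$, and after rescaling $Tb = b + 1$. Here $b$ is necessarily transcendental over $\C$: were $\C[b]$ finite-dimensional it would be semisimple, so $T$ would restrict to an automorphism of $\C[b]$ which, being of finite order and hence diagonalisable but also unipotent (since $T - I$ lowers the $b$-degree), would equal the identity, contradicting $Tb = b+1$. Hence $1, b, b^2, \dots$ are linearly independent, and since $T - I$ acts on polynomials in $b$ as the first-difference operator $p(x) \mapsto p(x+1) - p(x)$, lowering degree by exactly one, we obtain $b^k \in \ker(T-I)^{k+1} \setminus \ker(T-I)^k$ for every $k$, contradicting finite-dimensionality of the generalised eigenspace.

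With these facts in hand the conclusion is quick. As $1$ is isolated in $\sigma(T)$, form the associated Riesz decomposition $B = \C 1 \oplus Y$; on $\C 1 = \ker(T-I)$ the operator $T$ acts as the identity, there being no nilpotent part, while on $Y$ its spectrum is $\sigma(T) \setminus \{1\}$, a compact subset of the open unit disc, so $\rho(T|_Y) < 1$ and $T^n|_Y \to 0$. Thus $T^n \to Q$ in operator norm, where $Q$ is the rank-one projection of $B$ onto $\C 1$ along $Y$. Letting $n \to \infty$ in $T^n(bc) = (T^n b)(T^n c)$ and $T^n 1 = 1$ shows that $Q$ is a unital endomorphism, necessarily of the form $b \mapsto \hat b(x_0)1$, and the statements about $\phi$ then follow from the reverse implication already established. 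The main obstacle, to my mind, is precisely the identity $\ker(T-I) = \C 1$, together with the accompanying exclusions of extra peripheral eigenvalues and of Jordan blocks: these are the steps in which the algebraic hypotheses (that $T$ is an endomorphism and $B$ is semiprime) must be combined in an essential way with the topological one (that $\Phi_B$ is connected, entering through the triviality of idempotents).
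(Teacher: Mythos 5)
Your argument is correct and follows essentially the same route as the paper's source for this result (Lemma 1.1 and Theorem 1.2 of [FK5], whose ingredients are recalled in Section 2 here): the eigenspace $\ker(T-I)$ is a finite-dimensional commutative semisimple unital subalgebra, hence $\C^d$, and connectedness of $\Phi_B$ kills all but the trivial idempotent, forcing $d=1$; closure of the eigenvalues under powers handles the peripheral spectrum; and the Riesz decomposition at the isolated eigenvalue $1$ gives the norm convergence. The only assertion needing an extra line is that $T$ restricts to an \emph{automorphism} of a finite-dimensional $\C[b]$ (surjectivity follows from $T(q(b-1))=q(b)$), after which your unipotent-versus-diagonalisable contradiction, and the difference-operator argument excluding a Jordan block, go through.
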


In Section 2, we indicate that Proposition \ref{ssqc}
is valid for bounded unital
endomorphisms of commutative semiprime Banach algebras whose character
space is connected.
In Section 3, we consider the case where the character space need not be
connected. Using a fairly standard technique involving orthogonal
idempotents, we will prove the following result, which is a main result of this
note.

\begin{theorem}
\label{general-spqc}
Let $B$ be a unital commutative semiprime Banach
algebra, and let $T$ be a bounded unital
endomorphism of $B$. Then $T$ is quasicompact if and only if
there is a natural number $n$ such that the operators $(T^{kn})_{k=1}^\infty$
converge in operator norm to a finite-rank unital endomorphism of $B$.
\end{theorem}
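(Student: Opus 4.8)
The plan is to prove the two implications separately. The ``if'' direction is routine: if there is $n\in\N$ with $T^{kn}\to S$ in operator norm, where $S$ is a finite-rank unital endomorphism of $B$, then $S$ is compact, so $\operatorname{dist}(T^{kn},\mathcal{K}(B))\le\|T^{kn}-S\|\to0$; choosing $k$ with this distance $<1$ shows $\rho_e(T)<1$, so $T$ is quasicompact. For the ``only if'' direction, assume $\rho_e(T)=r<1$ and fix $\varepsilon\in(r,1)$. By the standard Riesz decomposition for operators of finite essential spectral radius (see, e.g., \cite{Heu}), the set $F:=\sigma(T)\cap\{z:|z|\ge\varepsilon\}$ is \emph{finite} and consists of eigenvalues of $T$ of finite algebraic multiplicity, and there is a $T$-invariant splitting $B=M\oplus N$ into closed subspaces with $M$ finite-dimensional, $\sigma(T|_M)=F$, and $\rho(T|_N)<\varepsilon$; the projection $P$ of $B$ onto $M$ along $N$, together with $I-P$, are the orthogonal idempotents in play. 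Since $\|T^k|_N\|\to0$ geometrically, the whole problem reduces to understanding the finite-dimensional operator $T|_M$.

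Here the semiprimeness of $B$ enters. If $b\in B\setminus\{0\}$ satisfies $Tb=\mu b$, then, as $B$ has no nonzero nilpotents, $b^j\ne0$ for every $j\ge1$, and $T(b^j)=(Tb)^j=\mu^j b^j$; hence, whenever $|\mu|\ge1$, all of $\mu,\mu^2,\mu^3,\dots$ lie in $F$. As $F$ is finite this forces (i) $T$ has no eigenvalue of modulus $>1$, and (ii) every $\mu\in F$ with $|\mu|=1$ is a root of unity. Let $n$ be the least common multiple of the orders of these finitely many roots of unity and put $S:=T^n$, a bounded unital endomorphism with $\rho_e(S)=r^n<1$; by (i) and (ii), $\sigma(S|_M)\subseteq\D$ and $1$ is the only point of $\sigma(S|_M)$ of modulus $1$.

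The crucial remaining step is that the eigenvalue $1$ of $S$ is semisimple, i.e.\ $\ker(S-I)^2=\ker(S-I)$, so that $S|_M$ has no nontrivial Jordan block on the unit circle. Suppose not: pick $b$ with $c:=(S-I)b\ne0$ and $(S-I)c=0$, that is, $Sb=b+c$ and $Sc=c$. Since $S$ is an endomorphism of the commutative algebra $B$ we have $S(b^m)=(b+c)^m$, and a routine finite-difference computation gives $(S-I)^m(b^m)=m!\,c^m$, while $(S-I)^{m+1}(b^m)=(S-I)(m!\,c^m)=m!\,((Sc)^m-c^m)=0$. Because $B$ is semiprime, $c^m\ne0$, so $b^m\in\ker(S-I)^{m+1}\setminus\ker(S-I)^m$ for every $m\ge1$; thus the chain $\ker(S-I)\subsetneq\ker(S-I)^2\subsetneq\cdots$ never stabilises. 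But each $\ker(S-I)^m$ is contained in $M$ (as $P$ commutes with $S$ and $S-I$ is invertible on $N$), which is finite-dimensional --- a contradiction. Consequently, in suitable coordinates $S|_M$ is the direct sum of the identity on $\ker(S-I)|_M$ and an operator of spectral radius $<1$, so $T^{kn}|_M=(S|_M)^k$ converges to the projection of $M$ onto $\ker(S-I)|_M$, while $T^{kn}|_N\to0$. Hence $T^{kn}$ converges in operator norm to some $S_\infty\in\mathcal{L}(B)$ of finite rank (its range lies in $M$), with $S_\infty 1=\lim_k T^{kn}1=1$, and, by continuity of multiplication, $S_\infty(ab)=\lim_k(T^{kn}a)(T^{kn}b)=(S_\infty a)(S_\infty b)$; so $S_\infty$ is a finite-rank unital endomorphism, completing the proof.

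I expect the main obstacle to be the semisimplicity of the peripheral eigenvalue (the ``no Jordan block'' step): it is exactly there that semiprimeness (to force $c^m\ne0$) and quasicompactness (to force, via finite-dimensionality of $M$, that the eigenvalue $1$ of $S$ has finite algebraic multiplicity) must be used together, whereas the absence of eigenvalues of modulus $>1$ and the root-of-unity property of the peripheral spectrum are comparatively soft consequences of the same two hypotheses.
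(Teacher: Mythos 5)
Your proof is correct, but it reaches the conclusion by a genuinely different route from the paper. Both arguments begin the same way: since $B$ has no non-zero nilpotents, powers of eigenvectors are non-zero, so the eigenvalues of $T$ are closed under taking powers; combined with the finiteness of $\sigma(T)\cap\{|\lambda|\ge 1\}$ coming from $\rho_e(T)<1$, this shows the peripheral spectrum consists of roots of unity and yields an $n$ with $\sigma(T^n)\subseteq\{|\lambda|<1\}\cup\{1\}$. From there the paper works with the \emph{algebra}: it observes that $\ker(I-T^n)$ is a finite-dimensional commutative semiprime (hence semisimple) algebra, so isomorphic to $\C^m$ and spanned by orthogonal idempotents $e_1,\dots,e_m$; it then splits $B=\bigoplus e_iB$ into unital summands with connected-type behaviour and invokes the connected-character-space lemma (which rests on the multiplicity-one statement imported from \cite{FK5}) on each summand, obtaining the limit explicitly as $S(b)=\sum_i S_i(be_i)$. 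You instead work with the \emph{Banach space}: a Riesz decomposition $B=M\oplus N$ with $M$ finite-dimensional, followed by a direct proof that the eigenvalue $1$ of $T^n$ is semisimple via the finite-difference identity $(T^n-I)^m(b^m)=m!\,c^m$ and semiprimeness ($c^m\ne 0$), which would otherwise force an infinite strictly increasing chain of kernels inside the finite-dimensional $M$. That step is the real content, and your argument for it is sound and self-contained; it replaces the paper's appeal to the Shilov idempotent theorem, the structure of finite-dimensional semiprime algebras, and the machinery of \cite{FK5}. What the paper's route buys is the finer structural description of the limit (rank equal to the number of orthogonal idempotents in $\ker(I-T^n)$, each $S_i$ a rank-one unital endomorphism of $e_iB$); what yours buys is a shorter, more elementary path to the stated equivalence that makes transparent exactly where semiprimeness is used twice, once for the root-of-unity reduction and once to kill the Jordan block.
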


This result extends earlier results of the authors \cite{FK3,FK4,FK5} for commutative semisimple
Banach algebras, and results for uniform algebras of Klein \cite{Kl} and Gamelin, Galindo and Lindstr\"{o}m
\cite{GGL1,GGL}.

Section 4 contains some results about commutative radical semiprime
Banach algebras, while Section 5 presents two examples of
commutative semisimple Banach algebras where each quasicompact
endomorphism is compact.

\section{Bounded endomorphisms of semiprime Banach algebras with connected character space}

   In order to extend the results from \cite{FK5}, we begin by examining the
properties of semisimplicity which were used in the proof of Lemma
1.1 of \cite{FK5}, and observing that they are more generally true. Specifically, we
note the following.

\begin{itemize}

\item
Let $B$ be a unital commutative Banach algebra. Then $\Phi_B$ is connected if and only if  the only idempotent elements in $B$ are $0$ and $1$. This is an immediate consequence of the Shilov Idempotent Theorem \cite[Theorem 2.4.33]{dales}.
\item
Let $B$ be a commutative unital semiprime Banach algebra, and let $T$ be a
unital endomorphism of $B$. Then, since $B$ has no non-zero nilpotent
elements, the set of eigenvalues of $T$ is closed under taking powers.

\item
Let $A$ be a finite-dimensional commutative semiprime Banach algebra. Since the radical of a finite dimensional algebra is
nilpotent \cite[Theorem 1.5.6(iv)]{dales}, it follows that $A$
is, in fact, semisimple. Thus $A$ is isomorphic to the
finite-dimensional commutative C*-algebra $\C^m$ (with coordinate-wise multiplication), where $m = \dim A$.

\end{itemize}

Using these observations it easily follows that the proof of  Lemma 1.1 of \cite{FK5} holds
when semisimple is replaced by
semiprime and we have the following lemma.

\begin{lemma}
Let $B$ be a unital commutative semiprime Banach
algebra with connected character space, and let $T$ be a bounded unital
quasicompact endomorphism of $B$.
Then $1$ is an eigenvalue of $T$ with multiplicity $1$ and eigenspace
$\C \cdot 1$, and
$\sigma(T)$ (the spectrum of $T$) is contained in
$\{\lambda:|\lambda| < 1\} \cup \{1\}$.\QED
\end{lemma}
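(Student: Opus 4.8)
The plan is to mimic the proof of Lemma 1.1 of \cite{FK5}, using the three bullet-point observations above to replace every appeal to semisimplicity. Since $B$ has connected character space, the first bullet tells us that $0$ and $1$ are the only idempotents of $B$. Since $T$ is quasicompact, $\rho_e(T)<1$, and the standard Riesz theory of the essential spectrum (applied via the Calkin algebra) shows that any spectral value $\lambda$ of $T$ with $|\lambda|\geq \rho_e(T)$, in particular any with $|\lambda|\geq 1$, is an isolated point of $\sigma(T)$ and an eigenvalue of finite algebraic multiplicity; moreover for each such $\lambda$ there is a corresponding spectral (Riesz) idempotent $P_\lambda\in\mathcal L(B)$ commuting with $T$, with finite-dimensional range $E_\lambda$ invariant under $T$.

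First I would establish that $1\in\sigma(T)$: since $T$ is unital, $T1=1$, so $1$ is certainly an eigenvalue. Next I would show that $\sigma(T)\cap\{|\lambda|\geq 1\}=\{1\}$. Suppose $\lambda\in\sigma(T)$ with $|\lambda|\geq1$. By the second bullet, the set of eigenvalues of $T$ is closed under taking powers (this is exactly where ``no non-zero nilpotents'' enters: if $Tb=\lambda b$ with $b\neq0$ then $b^k\neq0$ by semiprimeness, so $\lambda^k$ is again an eigenvalue). If $|\lambda|>1$ this produces eigenvalues $\lambda^k$ of arbitrarily large modulus, contradicting that $\sigma(T)$ is bounded. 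So $|\lambda|=1$. To exclude $\lambda\neq1$ on the unit circle one uses the finite-rank spectral idempotent: consider the finite-dimensional $T$-invariant subspace $F$ spanned by all generalized eigenspaces for eigenvalues of modulus $1$ together with their images under multiplication — more carefully, one passes to the finite-dimensional subalgebra of $B$ generated (as an algebra) by an eigenvector $b$ for $\lambda$. Here the third bullet is essential: any finite-dimensional commutative semiprime Banach algebra is semisimple, hence isomorphic to $\C^m$, so it contains nontrivial idempotents unless $m=1$; since the only idempotents in $B$ are $0,1$, such a subalgebra must be one-dimensional, forcing $b\in\C\cdot1$ and hence $\lambda=1$. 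This simultaneously handles the eigenvalue location and shows the eigenspace for $1$ is $\C\cdot1$ with multiplicity $1$.

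For the multiplicity-one claim I would argue as follows. The algebraic eigenspace (the range of the Riesz idempotent $P_1$) is a finite-dimensional $T$-invariant subspace; I want to show it is exactly $\C\cdot1$. Take any $b$ in the generalized $1$-eigenspace. The closed subalgebra of $B$ generated by $b$ and $1$ is commutative and semiprime (a closed subalgebra of a commutative semiprime Banach algebra inheriting the no-nilpotents property), but it need not be finite dimensional, so instead I work with eigenvectors directly: if $Tb=b$ with $b\neq0$, then $b^k\neq0$ for all $k$ and $Tb^k=b^k$, and the finite-dimensionality of the $1$-eigenspace forces the powers of $b$ to be linearly dependent, so $b$ satisfies a polynomial $p(b)=0$; factoring $p$ and using semiprimeness shows $b$ generates a finite-dimensional semisimple, hence by the third bullet a $\C^m$, subalgebra whose idempotents must be trivial, giving $b\in\C\cdot1$. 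For genuinely generalized (not honest) eigenvectors one reduces to the honest case by the usual argument that $(T-1)$ maps the finite-dimensional generalized eigenspace into itself nilpotently, and its kernel is $\C\cdot1$, forcing the whole space to be $\C\cdot1$ since a nilpotent operator on a space with one-dimensional kernel acts on a space of dimension equal to the length of a single Jordan block — but then $(T-1)b$ would be a nonzero element fixed by... one must check this does not produce new nilpotents; in fact if $(T-1)^2b=0$ and $c:=(T-1)b\neq0$ then $Tc=c$, so $c\in\C\cdot1$, say $c=\alpha1$, and then $Tb=b+\alpha1$; squaring and using $Tb^2=(Tb)^2$ gives a relation forcing $\alpha=0$. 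This last computation is the one genuinely delicate point.

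The main obstacle I anticipate is precisely this last step: ruling out nontrivial Jordan blocks at the eigenvalue $1$, i.e.\ showing the generalized eigenspace coincides with the honest eigenspace. Unlike the honest-eigenvector arguments, which reduce cleanly to finite-dimensional semiprime subalgebras and the third bullet, the generalized case requires exploiting the multiplicative structure of $T$ (that $T$ is an endomorphism, not merely a power-compact operator) to convert a would-be Jordan block into a nilpotent element of $B$, contradicting semiprimeness. Once that is in hand, the spectrum statement $\sigma(T)\subseteq\{|\lambda|<1\}\cup\{1\}$ follows by combining the power-closedness of the eigenvalue set with the Riesz decomposition of the part of $\sigma(T)$ outside the open unit disc, and the proof concludes exactly as in \cite[Lemma 1.1]{FK5}.
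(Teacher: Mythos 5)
Your overall strategy is exactly the paper's: the paper gives no written proof of this lemma, asserting only that the proof of Lemma 1.1 of \cite{FK5} goes through once the three bullet-point observations replace the uses of semisimplicity, and your reconstruction uses precisely those three observations in the intended places (power-closedness of the eigenvalue set from the absence of nilpotents; $\ker(T-I)$ is a finite-dimensional commutative semiprime, hence semisimple, unital subalgebra $\cong \C^m$; connectedness of $\Phi_B$ kills all non-trivial idempotents, forcing $m=1$ and ruling out unimodular eigenvalues other than $1$ via roots of unity and $T^m$).

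The one place where your sketch does not close is the step you yourself flag: the claim that ``squaring and using $Tb^2=(Tb)^2$ gives a relation forcing $\alpha=0$'' is not correct as stated. If $Tb=b+\alpha 1$ then $Tb^2=b^2+2\alpha b+\alpha^2 1$, so $(T-I)^2b^2=2\alpha^2 1$; this is not a contradiction, because a priori $b^2$ is only known to lie in $\ker(T-I)^3$, not in $\ker(T-I)^2$. To finish, iterate rather than stop at the square: since $T$ is a unital homomorphism with $Tb=b+\alpha 1$, the evaluation homomorphism $p\mapsto p(b)$ from $\C[x]$ to $B$ intertwines $T$ with the substitution $p(x)\mapsto p(x+\alpha)$, so the standard finite-difference identity gives
\[
(T-I)^k b^k = k!\,\alpha^k\, 1, \qquad (T-I)^{k+1}b^k=0 \qquad (k\in\N).
\]
Because $\rho_e(T)<1$, the point $1$ is a pole of the resolvent of some finite order $p$, so $\ker(T-I)^{p}=\ker(T-I)^{p+1}$. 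Then $b^{p}\in\ker(T-I)^{p+1}=\ker(T-I)^{p}$ forces $p!\,\alpha^{p}1=0$, i.e.\ $\alpha=0$; larger Jordan blocks reduce to this case by replacing $b$ with $(T-I)^{j-2}b$. (This is exactly the ``convert a Jordan block into the multiplicative structure'' idea you describe, but it genuinely needs either the $k$-th power identity together with finite ascent, or the observation that $1,b,b^2,\dots$ would span arbitrarily long $(T-I)$-chains inside the finite-dimensional generalized eigenspace; a single squaring does not suffice.) With that repair the proof is complete and coincides with the argument the paper is invoking.
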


Armed with this lemma, the proof of the convergence of the operators
$T^n$ in Theorem 1.2 of
\cite{FK5} is equally valid for semiprime algebras, and we obtain
the corresponding result for semiprime algebras.

\begin{theorem}
\label{spqc}
Let $B$ be a unital commutative semiprime Banach
algebra with connected character space, let $T$ be a bounded, unital
endomorphism of $B$, and let $\phi$ be the associated selfmap of $\Phi_B$.
Then $T$ is quasicompact if and only if
the operators $T^n$ converge in operator norm to a rank-one unital
endomorphism of $B$; in this case $\phi$ has a unique fixed point $x_0 \in \Phi_A$, and
the rank-one endomorphism above must be the endomorphism $b \mapsto \hat b(x_0) 1$.\QED
\end{theorem}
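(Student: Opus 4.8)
The plan is to follow the proof of Theorem~1.2 of \cite{FK5} essentially verbatim, the point being that the only place semisimplicity was used there was via Lemma~1.1 of \cite{FK5}, whose semiprime analogue is exactly the Lemma stated just above. So assume first that $T$ is quasicompact, i.e.\ $\rho_e(T)<1$. Since $T$ is unital, $1$ is an eigenvalue of $T$; by the Lemma, the eigenspace for $1$ is $\C\cdot 1$, the eigenvalue $1$ has multiplicity $1$, and $\sigma(T)\subseteq\{\lambda:|\lambda|<1\}\cup\{1\}$. Because $\rho_e(T)<1$, the classical Riesz theory of quasicompact operators shows that $1$ is an isolated point of $\sigma(T)$ and that the associated spectral (Riesz) projection $P$ has finite rank; the multiplicity statement in the Lemma then gives that $P$ has rank one with range $\C\cdot 1$. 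Write $B=\C\cdot 1\oplus B_0$, where $B_0=\ker P$; since Riesz projections commute with the operator, $T$ maps $B_0$ into itself, $T$ acts as the identity on $\C\cdot 1$, and $\sigma(T|_{B_0})=\sigma(T)\setminus\{1\}$ is a compact subset of the open unit disc, so $\rho(T|_{B_0})<1$. Hence $\norm{T^n|_{B_0}}\to 0$ by the spectral radius formula, and consequently $\norm{T^n-P}\le\norm{I-P}\,\norm{T^n|_{B_0}}\to 0$; that is, $T^n\to P$ in operator norm.

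It remains to identify $P$ and deduce the statement about $\phi$. As an operator-norm limit of the unital endomorphisms $T^n$, and using joint continuity of multiplication in $B$ together with the boundedness of $(T^n)$, the operator $P$ is itself a unital endomorphism of $B$: indeed $P(ab)=\lim_n T^n(ab)=\lim_n (T^na)(T^nb)=(Pa)(Pb)$ and $P1=\lim_n T^n1=1$. Since $P$ has rank one and $P1=1\neq 0$, its range is $\C\cdot 1$, so $Pb=\nu(b)\cdot 1$ for some linear functional $\nu$ with $\nu(1)=1$, and multiplicativity of $P$ forces $\nu$ to be multiplicative, hence a character $x_0\in\Phi_B$; thus $Pb=\hat b(x_0)\cdot 1$ for all $b$. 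Passing to the limit in the identities $T^{n+1}=T\,T^n=T^n\,T$ gives $TP=PT=P$. Computing $PTb=\widehat{Tb}(x_0)\,1=\hat b(\phi(x_0))\,1$ and comparing with $Pb=\hat b(x_0)\,1$ yields $\hat b(\phi(x_0))=\hat b(x_0)$ for every $b\in B$; since the Gelfand transforms separate the points of $\Phi_B$, we conclude $\phi(x_0)=x_0$. For uniqueness, if $\phi(x_1)=x_1$ then $\widehat{T^nb}(x_1)=\hat b(\phi^n(x_1))=\hat b(x_1)$ for all $n$, while $\norm{T^nb-Pb}\to 0$ gives $\widehat{T^nb}(x_1)\to\widehat{Pb}(x_1)=\hat b(x_0)$; hence $\hat b(x_1)=\hat b(x_0)$ for all $b\in B$, so $x_1=x_0$.

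For the converse, suppose the operators $T^n$ converge in operator norm to a rank-one (hence finite-rank, hence compact) unital endomorphism $P$ of $B$. Then $\norm{T^n-P}<1$ for all sufficiently large $n$, so the distance from $T^n$ to $\mathcal{K}(B)$ is strictly less than $1$; by the characterisation of quasicompactness recalled in the Introduction, $\rho_e(T)<1$, i.e.\ $T$ is quasicompact.

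The substance of the argument — the spectral picture for a quasicompact bounded unital endomorphism of a semiprime algebra with connected character space (that $1$ is a simple eigenvalue with eigenspace $\C\cdot 1$, and that the remainder of $\sigma(T)$ lies strictly inside the unit circle) — is packaged into the Lemma, which in turn rests on the three observations preceding it: the Shilov idempotent theorem (ruling out nontrivial idempotents), the closure of the eigenvalue set under powers (which comes from the absence of nonzero nilpotents), and the semisimplicity of a finite-dimensional semiprime algebra. I therefore expect the main obstacle to be entirely contained in establishing the Lemma; given the Lemma, the remaining steps above are routine, the only mild subtleties being the verification that the operator-norm limit of the $T^n$ is again an endomorphism and the uniqueness of the fixed point of $\phi$, both handled by passing to limits and using that the Gelfand transforms separate the points of $\Phi_B$.
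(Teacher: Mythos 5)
Your proposal is correct and takes essentially the same route as the paper, which simply observes that the proof of Theorem~1.2 of \cite{FK5} goes through verbatim once Lemma~1.1 of \cite{FK5} is replaced by its semiprime analogue (the Lemma preceding the statement); you have merely written out the standard Riesz-projection argument that the paper leaves implicit. All the details you supply (the spectral splitting $B=\C\cdot 1\oplus B_0$, the identification of the limit $P$ as $b\mapsto\hat b(x_0)1$, and the uniqueness of the fixed point via separation of characters) are the intended ones.
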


We immediately obtain the following useful corollary.
\begin{corollary}
\label{useful-corollary}
Let $B$ be a unital commutative semiprime Banach
algebra with connected character space, let $T$ be a bounded unital
endomorphism of $B$, and let $\phi$ be the associated selfmap of $\Phi_B$.
\begin{enumerate}
\item[(i)]
If $\phi$ has no fixed points in $\Phi_B$, then $T$ is not quasicompact.
\item[(ii)]
Otherwise, let $x_0 \in \Phi_B$ be
a fixed point of $\phi$. Then $T$ is quasicompact if and only if
the operators $T^n$ converge in operator norm to the rank-one unital
endomorphism of $B$ defined by $b \mapsto \hat b(x_0) 1$.\QED
\end{enumerate}
\end{corollary}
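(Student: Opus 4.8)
The plan is to read off both parts directly from Theorem \ref{spqc}, whose conclusions I simply re-express in terms of the fixed-point behaviour of $\phi$. For part~(i) I would argue by contraposition: Theorem \ref{spqc} guarantees that \emph{if} $T$ is quasicompact, then $\phi$ has a (unique) fixed point in $\Phi_B$. Consequently, if $\phi$ has no fixed point in $\Phi_B$, then $T$ cannot be quasicompact. This requires nothing beyond the cited theorem.

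For part~(ii), I would fix a fixed point $x_0 \in \Phi_B$ of $\phi$ and write $S$ for the rank-one unital endomorphism $b \mapsto \hat b(x_0)\, 1$, treating the two implications separately. For the forward implication, suppose $T$ is quasicompact. Theorem \ref{spqc} then yields a unique fixed point $y_0$ of $\phi$ such that the operators $T^n$ converge in operator norm to the endomorphism $b \mapsto \hat b(y_0)\, 1$. Since $x_0$ is also a fixed point of $\phi$ and uniqueness holds, $x_0 = y_0$, so the limit is exactly $S$. For the reverse implication, I would note that $S$ is itself a rank-one unital endomorphism of $B$; hence, if $T^n \to S$ in operator norm, the ``if'' part of Theorem \ref{spqc} applies verbatim and gives that $T$ is quasicompact. (Alternatively, and more directly: norm convergence yields $\norm{T^n - S} < 1$ for some $n$, and since $S$ is compact this forces the distance from $T^n$ to $\mathcal{K}(B)$ to be strictly less than $1$, which by the criterion recalled in the Introduction is precisely quasicompactness.)

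The only step calling for any care is the identification $x_0 = y_0$ in the forward implication of~(ii): without the \emph{uniqueness} of the fixed point one could conclude only that $T^n$ converges to $b \mapsto \hat b(y_0)\, 1$ for \emph{some} fixed point $y_0$, not necessarily the prescribed $x_0$. Since Theorem \ref{spqc} supplies exactly this uniqueness, there is no genuine obstacle, and the corollary follows at once.
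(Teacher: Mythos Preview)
Your argument is correct and matches the paper's approach exactly: the paper simply states that the corollary is obtained ``immediately'' from Theorem~\ref{spqc} and gives no separate proof, and your derivation is precisely the intended straightforward reading of that theorem. There is nothing to add.
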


Note that, once we have found a fixed point $x_0$ of $\phi$, we can apply this corollary without having to check whether this fixed point is unique. However, if we \emph{do} know that $\phi$ has more than one fixed point, then Theorem \ref{spqc} tells us immediately that $T$ is not quasicompact.
\sskip
Let $B$ be a commutative Banach algebra without identity. Then (by the Shilov Idempotent Theorem again) $B$ has no non-zero idempotent elements if and only if $\Phi_{B^\#}$ is connected. One trivial special case of this is, of course, when $B$ is radical. Note that it is possible for $\Phi_{B^\#}$ to be connected when $\Phi_B$ is disconnected, and vice-versa.

\begin{corollary}
\label{radical}
Let $B$ be a commutative semiprime Banach algebra which has no non-zero idempotent elements, and let $T$
be a bounded endomorphism of $B$. Then $T$ is quasicompact if and
only if $T^n \rightarrow 0$ in operator norm.
\end{corollary}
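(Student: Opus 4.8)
The plan is to reduce to the unital, connected case already handled by Theorem~\ref{spqc} by passing from $B$ to its unitization $B^\#$. First I would observe that $B^\#$ is again a commutative semiprime Banach algebra: if $B$ has no non-zero nilpotents, then neither does $B^\#$, since a nilpotent element $\lambda 1 + b$ of $B^\#$ would force $\lambda$ to be a nilpotent scalar (hence $0$) and then $b$ to be a nilpotent element of $B$. Next, since $B$ has no non-zero idempotents, the remark preceding the corollary (via the Shilov Idempotent Theorem) tells us that $\Phi_{B^\#}$ is connected. Now extend $T$ to a unital endomorphism $T^\#$ of $B^\#$ by $T^\#(\lambda 1 + b) = \lambda 1 + Tb$; this is bounded since $T$ is. The key compatibility point is that $T$ is quasicompact on $B$ if and only if $T^\#$ is quasicompact on $B^\#$: the operator $T^\#$ differs from $T \oplus 0$ on the topological direct sum $B^\# = \C 1 \oplus B$ only by a rank-one perturbation (the action on the $\C 1$ summand), and adding a finite-rank operator does not change the essential spectral radius, so $\rho_e(T^\#) = \rho_e(T)$.

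With these observations in place, suppose first that $T$ is quasicompact. Then $T^\#$ is a bounded unital quasicompact endomorphism of the unital commutative semiprime algebra $B^\#$ with connected character space, so Theorem~\ref{spqc} applies: $(T^\#)^n$ converges in operator norm to the rank-one endomorphism $c \mapsto \hat c(x_0) 1$ for the unique fixed point $x_0$ of the selfmap associated with $T^\#$. I would then identify this fixed point. Since $\Phi_{B^\#} = \Phi_B \cup \{\infty\}$, where $\infty$ is the character of $B^\#$ vanishing on $B$, and since $T^\#$ maps $B$ into $B$, the character $\infty$ is automatically a fixed point of the associated selfmap; by uniqueness it is \emph{the} fixed point, i.e. $x_0 = \infty$. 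Hence the limit endomorphism sends $\lambda 1 + b \mapsto \lambda 1$, which is precisely $0$ when restricted to $B$. Therefore $(T^\#)^n|_B = T^n \to 0$ in operator norm.

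Conversely, if $T^n \to 0$ in operator norm, then for large $n$ we have $\|T^n\| < 1$, so certainly $\rho_e(T) \le \limsup_n \|T^n\|^{1/n} = 0 < 1$ (indeed $T$ is then Riesz), and $T$ is quasicompact. This direction is routine.

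The step I expect to require the most care is the identification of the fixed point $x_0$ as the character $\infty$ at infinity, and the verification that the resulting rank-one endomorphism of $B^\#$ restricts to the zero operator on $B$ --- equivalently, that no fixed point of the associated selfmap can lie in $\Phi_B$. The cleanest argument is the uniqueness clause in Theorem~\ref{spqc} together with the fact that $T^\#$ preserves the ideal $B$ (so its adjoint fixes the character $\infty$); one should double-check that $\infty$ really is a character of $B^\#$, which holds precisely because $B$ has no identity. The bookkeeping that $T^\#$ is quasicompact iff $T$ is --- a finite-rank perturbation argument on the decomposition $B^\# = \C 1 \oplus B$ --- is standard but worth stating explicitly.
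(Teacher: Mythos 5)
Your proof is correct and follows essentially the same route as the paper's: extend $T$ to a bounded unital endomorphism $T^{\#}$ of $B^{\#}$ (semiprime, with $\Phi_{B^{\#}}$ connected via the Shilov Idempotent Theorem because $B$ has no non-zero idempotents), apply Theorem~\ref{spqc}, and identify the limiting rank-one endomorphism as the one induced by the character vanishing on $B$ --- a step the paper compresses into ``it follows'' and which you fill in correctly via the uniqueness of the fixed point. One small correction to the converse direction: $\|T^{n}\|\to 0$ gives $\rho_{e}(T)\le\rho(T)=\lim_{n}\|T^{n}\|^{1/n}<1$, but \emph{not} $\lim_{n}\|T^{n}\|^{1/n}=0$, so your parenthetical claim that $T$ is then Riesz is unjustified (and false for general operators); quasicompactness nevertheless follows at once since $\operatorname{dist}(T^{n},\mathcal{K}(B))\le\|T^{n}\|<1$ for large $n$.
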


\begin{proof}
Clearly, if $T^n \rightarrow 0$ in operator norm, then $T$ is quasicompact.

Conversely, suppose that $T$ is quasicompact.
Obviously $B$ has no identity element. We may extend $T$ to a bounded unital endomorphism $T^\#$ of the commutative unital semiprime Banach algebra $B^\#$, and then $T^\#$ is also quasicompact. By Theorem \ref{spqc}, the powers of $T^\#$ converge in operator norm to a rank-one unital endomorphism of $B^\#$. It follows that $T^n \rightarrow 0$ in operator norm.
\end{proof}

In particular, for radical semiprime Banach algebras we have the following corollary,
which will be needed in Section 4.

\begin{corollary}
\label{radical}
Let $R$ be a commutative radical semiprime Banach algebra, and let $T$
be a bounded endomorphism of $R$. Then $T$ is quasicompact if and
only if $T^n \rightarrow 0$ in the operator norm.\QED
\end{corollary}

\section{Extension to more general semiprime Banach algebras}

    We now wish to generalize these results to the setting where the
algebra is semiprime and the character space need not be connected.
A further examination of the proof of Theorem 1.2 of \cite{FK5} reveals immediately
that the following more general result holds.

\begin{lemma}
\label{ops-lemma}
Let $B$ be a unital commutative semiprime Banach
algebra, and let $T$ be a bounded unital
quasicompact endomorphism of $B$.
Suppose that \[
\sigma(T) \subseteq \{\lambda \in \C:|\lambda| < 1\} \cup \{1\}\]
and that
the eigenvalue $1$ of $T$ has multiplicity $1$.
Then the operators $T^n$ converge in operator norm to a rank-one unital
endomorphism $S$ of $B$.\QED
\end{lemma}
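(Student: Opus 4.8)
The statement is purely operator‑theoretic, and the plan is to retrace the relevant portion of the proof of Theorem~1.2 of \cite{FK5}: use quasicompactness to peel off the point $1$ of $\sigma(T)$ with a Riesz idempotent, identify that idempotent as a rank‑one unital endomorphism, and show that the complementary part of $T$ is strictly contractive in spectral radius. Write $\rho = \rho_e(T)$; quasicompactness gives $\rho < 1$. The first step is to show that $1$ is an \emph{isolated} point of $\sigma(T)$ of finite algebraic multiplicity. For $|\lambda| > \rho$ the operator $\lambda I - T$ is Fredholm (its image in the Calkin algebra is invertible), and since the open set $\{\lambda : |\lambda| > \rho\}$ is connected and contains points — all large $\lambda$ — at which $\lambda I - T$ is invertible, the Fredholm index of $\lambda I - T$ is $0$ throughout that set; hence every point of $\sigma(T) \cap \{|\lambda| > \rho\}$ is an isolated eigenvalue of finite algebraic multiplicity, and for any fixed $r \in (\rho,1)$ there are only finitely many such points with $|\lambda| \ge r$ (that region being compact). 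By the hypothesis $\sigma(T) \subseteq \{|\lambda| < 1\} \cup \{1\}$ it follows that $1$ is isolated in $\sigma(T)$ and that $\sigma(T) \setminus \{1\}$ is a compact subset of $\OD$, hence of spectral radius strictly less than $1$.

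Next I would introduce the Riesz idempotent $P = \frac{1}{2\pi i}\int_\Gamma (\lambda I - T)^{-1}\,\rd\lambda$, with $\Gamma$ a small circle around $1$. Then $P$ commutes with $T$, the splitting $B = PB \oplus (I-P)B$ is $T$‑invariant, $\sigma(T|_{PB}) = \{1\}$, $\sigma(T|_{(I-P)B}) = \sigma(T) \setminus \{1\}$, and $PB = \bigcup_{k \ge 1}\ker(T - I)^k$ is finite‑dimensional. The multiplicity hypothesis forces $PB$ to be one‑dimensional; since $T1 = 1$ puts $1 \in \ker(T-I) \subseteq PB$, we get $PB = \C\cdot 1$, so $P$ is a rank‑one idempotent with $P1 = 1$. (If ``multiplicity $1$'' is meant as $\dim\ker(T-I)=1$ rather than as the rank of $P$, one first rules out a Jordan block: since $T$ is a unital endomorphism, on the unital subalgebra generated by a putative generalized eigenvector $x$ with $(T-I)x = 1$ the operator $T-I$ acts as the forward‑difference operator $p(x)\mapsto p(x+1)-p(x)$, so $(T-I)^k(x^k)=k!\cdot 1\neq 0$ while $(T-I)^{k+1}(x^k)=0$; the elements $1,x,x^2,\dots$ are then linearly independent generalized eigenvectors, contradicting finite algebraic multiplicity, so $PB = \C\cdot 1$ again.)

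To finish: on $PB = \C\cdot 1$ we have $T = I$, so $T^nP = P$ for all $n$; and since the spectral radius of $T|_{(I-P)B}$ is strictly less than $1$, the spectral radius formula gives $\|(T|_{(I-P)B})^n\| \to 0$. Hence
\[
\|T^n - P\| \;=\; \|T^n(I-P)\| \;\le\; \|(T|_{(I-P)B})^n\|\,\|I - P\| \;\longrightarrow\; 0\,,
\]
so $T^n \to S := P$ in operator norm. Finally $S$ has rank one, $S1 = 1$, and $S$ is an endomorphism: each $T^n$ is one, and joint continuity of multiplication gives $S(ab) = \lim T^n(ab) = \lim (T^na)(T^nb) = (Sa)(Sb)$. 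Thus $S$ is a rank‑one unital endomorphism of $B$, as claimed.

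I expect the only delicate part to be the spectral bookkeeping of the first paragraph — extracting from quasicompactness that $1$ sits isolated in $\sigma(T)$ with finite algebraic multiplicity — together with, under the weaker reading of the hypothesis, the short difference‑operator argument excluding a Jordan block at $1$; once those are in place the spectral splitting and the norm estimate are entirely routine.
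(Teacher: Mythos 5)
Your proof is correct and follows essentially the same route as the paper, which simply defers to the proof of Theorem~1.2 of \cite{FK5}: quasicompactness makes $1$ an isolated spectral point whose Riesz projection has finite rank, the multiplicity hypothesis forces that projection to be the rank-one idempotent onto $\C\cdot 1$, and the complementary part of $T$ has spectral radius strictly less than $1$, giving norm convergence of $T^n$ to a limit that is automatically a unital endomorphism. Your parenthetical difference-operator argument ruling out a Jordan block at $1$ is a correct and worthwhile addition that resolves the geometric-versus-algebraic multiplicity ambiguity.
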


The method we use to obtain results when the character space is
disconnected is based on a standard technique involving orthogonal
idempotents.

\begin{theorem}
Let $B$ be a unital commutative semiprime Banach
algebra, and let $T$ be a bounded unital
quasicompact endomorphism of $B$.
Then there exists an $n \in \N$ such that
\begin{equation}
\label{condition-star}
\sigma(T^n) \subseteq \{\lambda\in \C:|\lambda| < 1\} \cup
\{1\}\,.
\end{equation}
For such $n$, the unital quasicompact endomorphism $T^n$ of $B$ has
the following properties.
\begin{enumerate}
\item[(i)]
The eigenspace of $T^n$ corresponding to eigenvalue $1$ is a
finite-dimensional, unital subalgebra of $B$ isomorphic to $\C^m$ for some
$m \in \N$, and hence spanned by $m$ orthogonal idempotents, say $e_1, e_2,
\dots,e_m$.
\item[(ii)]
Set $B_i=e_i B$ $(1 \leq i \leq m)$. Then (under an equivalent norm) each $B_i$ is a commutative,
unital semiprime Banach algebra, with identity $e_i$, and
\[
B=\bigoplus_{i=1}^m B_i\,.
\]
\item[(iii)]
For $1\leq i\leq m $, $T^n|_{B_i}$ is a unital quasicompact
endomorphism of $B_i$, and $T^n|_{B_i}$ satisfies the conditions of Lemma \ref{ops-lemma}. The operators $(T^{kn}|_{B_i})_{k=1}^\infty$ converge in
operator norm to a rank-$1$ unital endomorphism of $B_i$, say $S_i$.
\item[(iv)]
The operators $(T^{kn})_{k=1}^\infty$ converge in operator norm to the
rank-$m$ endomorphism $S$ of $B$ given by
\[S(b)=\sum_{i=1}^m S_i(b e_i) ~~~(b \in B).\]
\end{enumerate}
\end{theorem}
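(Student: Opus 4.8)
The plan is to produce the exponent $n$ of~\eqref{condition-star} first and then to carry out the orthogonal-idempotent decomposition, which will reduce parts (iii) and (iv) to Lemma~\ref{ops-lemma}. To produce $n$: since $T$ is quasicompact, $\rho_e(T)<1$, so by the Riesz theory of quasicompact operators, for any $r$ with $\rho_e(T)<r<1$ the set $\sigma(T)\cap\{\lambda\in\C:|\lambda|\ge r\}$ consists of finitely many eigenvalues of finite algebraic multiplicity, and $1$ is among them because $T1=1$. I would then invoke the second observation of Section~2: in a semiprime algebra the set of eigenvalues of a unital endomorphism is closed under taking powers (if $Tb=\lambda b$ with $b\ne 0$ then $b$ is not nilpotent, so $T(b^k)=\lambda^k b^k$ exhibits $\lambda^k$ as an eigenvalue). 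Since $\sigma(T)$ is bounded this forces $|\lambda|\le 1$ for every eigenvalue $\lambda$; and the eigenvalues of modulus $1$ form a finite set which is closed under powers, hence consists of roots of unity. Letting $n$ be the least common multiple of the orders of these peripheral eigenvalues and applying the spectral mapping theorem to $z\mapsto z^n$ gives $\sigma(T^n)\subseteq\{|\lambda|<1\}\cup\{1\}$. I expect this to be the main obstacle: it is precisely here that semiprimeness is used in an essential way, converting ``finitely many peripheral eigenvalues'' into ``peripheral eigenvalues are roots of unity''.

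Next, fix such an $n$. Then $\rho_e(T^n)=\rho_e(T)^n<1$, so $T^n$ is again a bounded unital quasicompact endomorphism, and by~\eqref{condition-star} together with quasicompactness $1$ is an isolated point of $\sigma(T^n)$. For (i): the eigenspace $E:=\ker(T^n-I)$ is closed, is contained in the finite-dimensional range of the Riesz projection of $T^n$ at $1$, and is a unital subalgebra of $B$ since $T^n$ is multiplicative and fixes $1$. As a finite-dimensional commutative algebra having no non-zero nilpotents (these are inherited from $B$), $E$ is semiprime, hence semisimple by the third observation of Section~2, hence isomorphic to $\C^m$ with $m=\dim E$; I would then take $e_1,\dots,e_m$ to be the orthogonal idempotents corresponding to the standard basis of $\C^m$, so that $e_ie_j=\delta_{ij}e_i$ and $\sum_i e_i=1$.

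For (ii) and (iii): put $B_i=e_iB$. Each $B_i$ is closed, being the range of the bounded idempotent $b\mapsto e_ib$, and is a commutative unital Banach algebra with identity $e_i$ and no non-zero nilpotents, hence semiprime; as the $e_i$ are orthogonal with sum $1$, $B=\bigoplus_{i=1}^m B_i$, and replacing the norm of $B$ by $b\mapsto\max_i\|e_ib\|$ (equivalent by the open mapping theorem) makes all the coordinate projections and inclusions contractive. Since $T^ne_i=e_i$ we get $T^n(e_ib)=e_iT^n(b)$, so $T^n$ leaves each $B_i$ invariant and $T^n|_{B_i}$ is a bounded unital endomorphism of $B_i$; it is quasicompact because the essential spectral radius of the block-diagonal operator $T^n=\bigoplus_i T^n|_{B_i}$ equals $\max_i\rho_e(T^n|_{B_i})$, and likewise $\sigma(T^n|_{B_i})\subseteq\sigma(T^n)\subseteq\{|\lambda|<1\}\cup\{1\}$. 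The eigenspace of $T^n|_{B_i}$ at $1$ is $E\cap B_i$, which equals $\C e_i$ (since $E$ is spanned by $e_1,\dots,e_m$) and so is one-dimensional; thus $T^n|_{B_i}$ satisfies the hypotheses of Lemma~\ref{ops-lemma}, which provides a rank-one unital endomorphism $S_i$ of $B_i$ with $T^{kn}|_{B_i}=(T^n|_{B_i})^k\to S_i$ in operator norm.

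Finally, for (iv): from $T^{kn}=\bigoplus_i T^{kn}|_{B_i}$ and the convergence in each summand, $T^{kn}\to S:=\bigoplus_i S_i$ in operator norm (and hence also in the original norm), where $Sb=\sum_i S_i(e_ib)$ for $b\in B$; $S$ has rank $m$ since each $S_i$ has rank $1$ and maps into $B_i$. That $S$ is an endomorphism is the last routine point: expanding $S(b)S(c)=\sum_{i,j}S_i(e_ib)S_j(e_jc)$, the terms with $i\ne j$ vanish because $B_iB_j=\{0\}$, and the diagonal terms give $\sum_i S_i\!\big((e_ib)(e_ic)\big)=\sum_i S_i(e_ibc)=S(bc)$. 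Apart from the first paragraph, everything here is standard direct-sum bookkeeping, so I do not expect further difficulty.
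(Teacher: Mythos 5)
Your proposal is correct and follows essentially the same route as the paper: you obtain $n$ from the fact that the finitely many peripheral eigenvalues form a set closed under powers (hence are roots of unity), identify $\ker(I-T^n)$ as a finite-dimensional semiprime, hence semisimple, subalgebra $\cong\C^m$, decompose $B$ via the resulting orthogonal idempotents, and apply Lemma~\ref{ops-lemma} blockwise. You simply supply more of the details that the paper labels ``standard''.
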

\begin{proof}
As in the proof of Lemma 1.1 of \cite{FK5},  the existence of an $n$ satisfying (\ref{condition-star})
is an easy consequence of the following pair of facts: the set of
eigenvalues of $T$ is closed under taking powers and the spectrum of $T$ has no
limit point on the unit circle.

Now suppose that we have fixed such an $n$ satisfying (\ref{condition-star}). Then
(i) follows immediately from the fact that $\ker(I-T^n)$ is a
finite-dimensional, commutative semiprime algebra.
Now (ii) is a standard construction.
For (iii), it is clear that $T^n|_{B_i}$ is a unital endomorphism of $B_i$, and the multiplicity of the eigenvalue $1$ of this endomorphism is $1$ by construction. The quasicompactness of $T^n|_{B_i}$ is standard. Then, since every eigenvalue of $T^n|_{B_i}$ is also in $\sigma(T^n)$, it follows that $\sigma(T^n|_{B_i}) \subseteq \{\lambda\in \C:|\lambda| < 1\} \cup
\{1\}\,$. Thus $T^n|_{B_i}$ satisfies the conditions of Lemma \ref{ops-lemma}. The rest of (iii) now follows by applying Lemma \ref{ops-lemma} to $T^n|_{B_i}$. Finally,
(iv) follows immediately from (i), (ii) and (iii).
\end{proof}


Theorem \ref{general-spqc} is now an immediate corollary, since one implication is part of
the result above, while the converse is trivial.

\section{Radical Banach algebras of power series}

In this section we look briefly at radical Banach algebras of power
series.

We recall the following terminology and notation from \cite{Grab}. The algebra of complex formal power series in one variable is denoted by $\C[[z]]$. The coordinate projections on $\C[[z]]$ are $(\pi_n)_{n=0}^\infty$. Let $B$ be a subalgebra of $\C[[z]]$ with $z\in B$ and such that $B\subseteq \ker \pi_0$ (i.e., all elements of $B$ have constant coefficient $0$). Then $B$ is a \emph{generalized Banach algebra of power series} if it is a Banach algebra under some norm for which all of the functionals $\pi_n|_B$ are continuous.\footnote{In fact, surprising recent results from \cite{DPR} show that the continuity of the functionals $\pi_n|_B$ in this setting is automatic, while the corresponding statement for formal power series in two variables is false.}
In this case, for each $n \in \N$, we denote by $\|\pi_n\|$ the operator norm of the continuous linear functional $\pi_n|_B$. If $B$ is a generalized Banach algebra of power series such that the polynomials are dense in $B$, then $B$ is a \emph{Banach algebra of power series}. Since $\C[[z]]$ is an integral domain, these algebras of power series are certainly semiprime.

The reader should note that there are variations in the terminology and notation used in the literature. In \cite[Section 4.6]{dales}, for example, the algebras are allowed to be unital, and generalized Banach algebras of power series are called simply  Banach algebras of power series (with no requirement that the polynomials be dense).

Let $B$ be a generalized Banach algebra of power series. For each non-negative integer $j$, $S_{-j}(B)$ is the set of those formal power series $f$ with zero constant term for which $fz^j$ belongs to $B$. In fact $S_{-j}(B)$ is a Banach space when we define the norm of $f\in S_{-j}(B)$ to be the norm of $fz^j$ in $B$.

Let $B$ be a Banach algebra of power series. Then
every non-zero endomorphism of $B$ has the form
$f \rightarrow f \circ g$ (formal composition of power series) for some $g \in B$ \cite[p.7]{Grab}.
For those $g\in B$ which give rise to an endomorphism of $B$ in this way, we denote the corresponding endomorphism by $T_g$. In this case, we have $g = T_g z.$

A result of Loy \cite[Theorem 5.2.20]{dales} shows that endomorphisms of Banach
algebras of power series are automatically continuous. 
(See also \cite{DPR} for some striking recent developments concerning 
Fr\'{e}chet algebras of power series.)

It was previously shown that for
a wide class of radical Banach algebras of power series, every 
endomorphism is either an automorphism or compact
\cite[Theorem (2.6)]{Grab}. In such  cases
every quasicompact endomorphism is (trivially) compact. In particular, for many radical weights $\omega$, the Banach algebras
$\ell^p(\omega)$ are examples of radical
semiprime commutative Banach algebras for which every quasicompact
endomorphism is compact.

\begin{lemma} Let $B$ be a radical Banach algebra of
power series, and let $T$ be a quasicompact endomorphism of $B$. Set $g=Tz$ (so that $T=T_g$). Then $|\pi_1(g)| < 1$.
\end{lemma}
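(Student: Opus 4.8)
The plan is to realise $\lambda := \pi_1(g)$ as an eigenvalue of the adjoint of $T$ acting on the coordinate functional $\pi_1|_B$, and then to invoke Corollary \ref{radical}, which characterises quasicompactness of $T$ on a radical semiprime algebra by the condition $\|T^n\| \to 0$ in operator norm.

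First I would note that $\pi_1|_B$ is a continuous, non-zero linear functional on $B$: its continuity is part of the definition of a (generalized) Banach algebra of power series, and it is non-zero because $z \in B$ with $\pi_1(z) = 1$. The central computation is that $T^{*}(\pi_1|_B) = \lambda\,(\pi_1|_B)$. Indeed, writing $f = \sum_{k \ge 1} a_k z^k \in B$ and $g = \sum_{k \ge 1} b_k z^k$ (so that $b_1 = \lambda$), the power $g^{k}$ has order at least $k$ because $g$ has zero constant term; hence $\pi_1(g^{k}) = 0$ for every $k \ge 2$, while $\pi_1(g) = \lambda$, and therefore
\[
\pi_1(Tf) \;=\; \pi_1(f \circ g) \;=\; \sum_{k \ge 1} a_k\, \pi_1(g^{k}) \;=\; \lambda\, a_1 \;=\; \lambda\, \pi_1(f) \qquad (f \in B).
\]
Iterating, $(T^{*})^{n}(\pi_1|_B) = \lambda^{n}\,(\pi_1|_B)$ for every $n \in \N$.

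To conclude, since $B$ is a commutative radical semiprime Banach algebra and $T$ is quasicompact, Corollary \ref{radical} gives $\|T^{n}\| \to 0$. Then
\[
|\lambda|^{n}\,\|\pi_1|_B\| \;=\; \bigl\|(T^{*})^{n}(\pi_1|_B)\bigr\| \;\le\; \|T^{n}\|\,\|\pi_1|_B\| \;\longrightarrow\; 0,
\]
and since $\|\pi_1|_B\| > 0$ we must have $|\lambda| < 1$, as claimed. (In fact the same line of reasoning shows $\sigma(T) = \{0\}$, so even $\pi_1(g) = 0$, but only the stated bound is needed here.) I do not expect a genuine obstacle: the one step requiring a little care is the power-series bookkeeping giving $\pi_1 \circ T_g = \pi_1(g)\,\pi_1$, together with the routine check that $\pi_1|_B$ is an admissible non-zero continuous functional, so that passing to the adjoint is legitimate.
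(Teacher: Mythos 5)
Your main argument is correct and is essentially the paper's proof in adjoint language: the paper simply applies $T_g^n$ to $z$, notes $\pi_1(T_g^n z)=\pi_1(g)^n$, and uses Corollary \ref{radical} to get $T_g^n z\to 0$, which is the same computation as your identity $\pi_1\circ T_g=\pi_1(g)\,\pi_1$ evaluated at $f=z$. One caveat: your parenthetical claim that the same reasoning gives $\sigma(T)=\{0\}$ and hence $\pi_1(g)=0$ is false. The estimate $|\lambda|^n\le\|T^n\|$ only yields $|\lambda|\le r(T)$, and $\|T^n\|\to 0$ forces $r(T)<1$, not $r(T)=0$; indeed $g=\lambda z$ with $0<|\lambda|<1$ typically gives a compact (hence quasicompact) endomorphism $f\mapsto f(\lambda z)$ with $\pi_1(g)=\lambda\neq 0$, so the stated bound $|\pi_1(g)|<1$ is the best one can say. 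Since that remark is not used, the proof of the lemma itself stands.
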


\begin{proof}
Since the endomorphism $T=T_g$ is quasicompact, by Corollary \ref{radical},
$T_g^n \rightarrow 0$ in norm.
In particular, $T_g^nz \rightarrow 0.$ But $\pi_1(T_g^n z)=\pi_1(g)^n$, and so we must have $|\pi_1(g)| < 1.$
\end{proof}

The following proposition is \cite[Theorem 5.7]{Grab}.
\begin{proposition}
Suppose that $B$ and $S_{-1}(B)$
are both radical
generalized Banach algebras of power series, and that
$R:=\limsup(\|\pi_n\|\|z^n\|)^{1/n}$ is finite. Let $g \in B$ with
$|R\pi_1(g)| < 1$.  Then $T_g$ is a compact endomorphism of $B$.\QED
\end{proposition}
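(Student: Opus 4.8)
The plan is to exhibit $T_g$ as an operator-norm limit of bounded finite-rank operators; this simultaneously shows that $f\mapsto f\circ g$ really does define a bounded endomorphism of $B$ and that it is compact. For $N\in\N$ put $A_Nf=\sum_{n=1}^{N}\pi_n(f)\,g^n$ for $f\in B$. Each $\pi_n|_B$ is continuous and $g^n\in B$, so $A_N\in\mathcal L(B)$ has rank at most $N$ and $\norm{A_N}\le\sum_{n\le N}\norm{\pi_n}\norm{g^n}$. The whole proof reduces to the single estimate
\[
\Sigma:=\sum_{n=1}^{\infty}\norm{\pi_n}\,\norm{g^n}<\infty .
\]
Granting this, $(A_N)_N$ is Cauchy in operator norm, hence converges in operator norm to a compact operator $A$; on each polynomial $f$ we have $A_Nf=f\circ g$ once $N\ge\deg f$, so $A$ coincides with formal composition by $g$ on the (dense) polynomials, and therefore formal composition by $g$ extends continuously to the bounded homomorphism $A=T_g$, which is compact.

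To prove $\Sigma<\infty$ I would estimate $\norm{g^n}$ as follows. Write $b_1=\pi_1(g)$ and $g=b_1z+g_2$ with $g_2=g-b_1z\in B$ of order $\ge 2$; then $\gamma:=g_2/z$ has zero constant term and $\gamma z=g_2\in B$, so $\gamma\in S_{-1}(B)$ and $g=z(b_1+\gamma)$, giving
\[
g^n=z^n(b_1+\gamma)^n=\sum_{k=0}^{n}\binom nk b_1^{\,n-k}\,z^n\gamma^k .
\]
Since $S_{-1}(B)$ is radical, $\unorm{\gamma^k}{S_{-1}(B)}^{1/k}\to0$, so for every $\delta>0$ there is $C_\delta$ with $\unorm{\gamma^k}{S_{-1}(B)}\le C_\delta\delta^k$ for all $k$; and since $R<\infty$ there is $C\ge1$ with $\norm{\pi_m}\norm{z^m}\le C(R+\varepsilon)^m$ for all $m$. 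These are fed into the book-keeping through the elementary identities $\pi_a(f)=\pi_{a+b}(fz^{b})$ (for $a,b\ge0$), which give $\norm{\pi_a}\le\norm{\pi_{a+b}}\norm{z^b}$ and let one move powers of $z$ between the $z^m$-factors and the functionals $\pi_m$. After this manipulation $\Sigma$ is dominated by a product of two convergent series provided $\varepsilon,\delta$ are chosen small enough that $(R+\varepsilon)(|b_1|+\delta)<1$ — which is possible precisely because $|R\pi_1(g)|<1$.

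The step I expect to cause the real trouble is making the bound on $\norm{g^n}$ sharp enough. One must resist estimating $\norm{z^n\gamma^k}$ by the crude submultiplicative bound $\norm{z^{n-1}}\,\unorm{\gamma^k}{S_{-1}(B)}$ in $B$: because the weights $\norm{z^m}$ can decay faster than geometrically, that bound leaves a factor $\norm{z^{n-1}}$ which the finiteness of $R$ does not control, and the resulting series diverges. Instead one has to work with homogeneous components — write $g^n=\sum_{p\ge n}\pi_p(g^n)z^p$, use $\pi_{n+\ell}(g^n)=\sum_{j}\binom nj b_1^{\,n-j}\pi_\ell(\gamma^j)$ together with $|\pi_\ell(\gamma^j)|\le\unorm{\pi_\ell}{S_{-1}(B)}\,\unorm{\gamma^j}{S_{-1}(B)}$, and only then pair each weight $\norm{z^p}$ with the matching $\norm{\pi_p}$ via the transfer identities above — so that $R<\infty$ can actually be brought to bear. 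It is exactly at this point that all three hypotheses are needed at once: that $B$ is radical (so $\norm{z^m}^{1/m}\to0$ and the endomorphism ought to be norm-null), that $S_{-1}(B)$ is a radical \emph{generalized Banach algebra of power series} (so $\gamma^k$ lives in an algebra whose norms decay super-geometrically), and that $R$ is finite (so the $z^m$-weights never outrun the $\pi_m$ in the double sum).
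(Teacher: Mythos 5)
First, a point of reference: the paper does not prove this proposition at all --- it is quoted verbatim as \cite[Theorem 5.7]{Grab} --- so your attempt has to be judged on its own merits rather than against an argument in the text. Your overall strategy (reduce compactness to $\Sigma=\sum_n\|\pi_n\|\,\|g^n\|<\infty$, so that $T_g$ is the operator-norm limit of the finite-rank truncations $A_N$) is reasonable, and you correctly diagnose that the crude bound $\|z^n\gamma^k\|\le\|z^{n-1}\|\,\unorm{\gamma^k}{S_{-1}(B)}$ fails because $\|\pi_n\|\,\|z^{n-1}\|$ is not controlled by $R$. The problem is that your proposed repair fails for exactly the same structural reason, one level down. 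After you replace $|\pi_\ell(\gamma^j)|$ by $\unorm{\pi_\ell}{S_{-1}(B)^*}\unorm{\gamma^j}{S_{-1}(B)}$ you have decoupled $\ell$ from $j$, and the resulting double sum contains the factor $\sum_{\ell\ge1}\|\pi_n\|\,\|z^{n+\ell}\|\,\unorm{\pi_\ell}{S_{-1}(B)^*}$. The transfer identities $\|\pi_a\|\le\|\pi_{a+b}\|\,\|z^b\|$ only move indices in one direction, so every attempt to ``pair $\|z^{n+\ell}\|$ with $\|\pi_{n+\ell}\|$'' leaves behind an unmatched factor of the form $\|z^{m}\|\,\|\pi_{m+1}\|$ or $\|\pi_n\|\,\|z^{n-1}\|$, and the hypothesis $R<\infty$ says nothing about these: since $\|z^m\|$ may decay super-geometrically, such mismatched products can grow at an arbitrary geometric (or faster) rate even when $R=1$.

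This is not a hypothetical worry. Take $B=\ell^1(\omega)$ with $\omega(m)=1/m!$ and $g=bz+z^2$ with $0<|b|<1$. Then $B$ and $S_{-1}(B)$ are radical generalized Banach algebras of power series, $R=1$, and $\gamma=z$, so $\pi_\ell(\gamma^j)=\delta_{\ell j}$; but your bound replaces this by $\unorm{\pi_\ell}{S_{-1}(B)^*}\unorm{\gamma^j}{S_{-1}(B)}=(\ell+1)!/(j+1)!$, which is enormous for $\ell>j$. Already for $n=2$ your estimate for $\|g^2\|$ contains $\sum_{\ell\ge1}\|z^{2+\ell}\|\,(\ell+1)!=\sum_{\ell\ge1}(\ell+1)!/(\ell+2)!=\sum_{\ell\ge1}1/(\ell+2)=\infty$, so the chain of inequalities produces the useless bound $\|g^2\|\le\infty$ --- even though in this example one can check directly that $\Sigma<\infty$ and $T_g$ is compact. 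The conclusion is that the theorem is true (and your target inequality $\Sigma<\infty$ holds in the examples), but the step ``after this manipulation $\Sigma$ is dominated by a product of two convergent series'' is exactly the step that is false: the convergence of $\Sigma$ depends on the fact that $\pi_\ell(\gamma^j)$ vanishes for $\ell<j$ and is sharply localized near $\ell\approx j$, information which your decoupling inequality discards. A correct proof has to keep the powers of $\gamma$ (equivalently, of $g-\pi_1(g)z$) intact inside the norm of $S_{-1}(B)$ and exploit the quasinilpotence of that element in $S_{-1}(B)^\#$ without ever expanding into homogeneous components against separate $\|\pi_\ell\|$'s; as it stands, the central estimate of your argument is not established.
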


Combining the previous two results, we have the following.

\begin{corollary}
\label{radical-power-series}
Suppose that $S_{-1}(B)$ and $B$
are radical
Banach algebras of power series, and
that $\limsup(\|\pi_n\|\|z^n\|)^{1/n} = 1$.
Then every quasicompact endomorphism of $B$ is compact.\QED
\end{corollary}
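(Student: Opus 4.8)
The plan is simply to chain together the two preceding results, after disposing of a trivial case. Let $T$ be a quasicompact endomorphism of $B$. If $T = 0$ then $T$ is compact and there is nothing to prove, so I would assume $T \neq 0$. Since $B$ is a Banach algebra of power series, every non-zero endomorphism of $B$ is of the form $T_g$ for some $g \in B$, with $g = Tz$; thus $T = T_g$ for this $g$. Applying the Lemma to the quasicompact endomorphism $T = T_g$ gives $|\pi_1(g)| < 1$.

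Next I would verify the hypotheses of the Proposition. By assumption, both $B$ and $S_{-1}(B)$ are radical Banach algebras of power series, hence in particular radical \emph{generalized} Banach algebras of power series, so the Proposition is applicable to them. Writing $R := \limsup(\|\pi_n\|\,\|z^n\|)^{1/n}$ as in the Proposition, the hypothesis $\limsup(\|\pi_n\|\,\|z^n\|)^{1/n} = 1$ tells us that $R = 1$; in particular $R$ is finite, and $|R\,\pi_1(g)| = |\pi_1(g)| < 1$ by the previous paragraph. The Proposition therefore yields that $T_g = T$ is a compact endomorphism of $B$, which is what we want.

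There is no genuine obstacle here: the corollary is a one-line consequence of the Lemma and the Proposition once one notes that $T = 0$ is trivially compact. The only points worth a word of care are that a Banach algebra of power series is \emph{a fortiori} a generalized Banach algebra of power series, so that the Proposition genuinely applies, and that the normalization $\limsup(\|\pi_n\|\,\|z^n\|)^{1/n} = 1$ is precisely what makes the constant $R$ both finite and equal to $1$, so that the Proposition's requirement $|R\,\pi_1(g)| < 1$ collapses exactly to the bound $|\pi_1(g)| < 1$ delivered by the Lemma.
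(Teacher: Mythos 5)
Your proposal is correct and is exactly the argument the paper intends: the corollary is stated as an immediate combination of the preceding Lemma and Proposition, with $R=1$ making the two conditions $|R\,\pi_1(g)|<1$ and $|\pi_1(g)|<1$ coincide. Your extra care over the zero endomorphism and over Banach algebras of power series being generalized Banach algebras of power series is sensible but does not change the route.
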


Let $A$ be $A(\D)$ or $H^p(\OD)$ for some $p \in [1,\infty)$.
Using the definition of convolution multiplication on $A$ from Section 1, it was
shown in \cite[Section 13]{Grab}, that $(A,\ast)$ is a
commutative radical semiprime Banach algebra
which can be identified with a radical Banach algebra of power series $B$
satisfying the hypotheses of Corollary \ref{radical-power-series} (see, in particular, \cite[Theorem (13.10)]{Grab}). Thus, for $B$, and hence also for $(A,\ast)$, every
quasicompact endomorphism is compact.

\section{Two semisimple examples}

We have just seen several examples of commutative radical semiprime
Banach algebras where every
quasicompact endomorphism is compact. In this section we give two examples of
commutative semisimple Banach algebras where this holds. This is in
contrast to the commutative semisimple Banach algebra $C^1[0,1]$ where there exist a
quasicompact endomorphism which is not Riesz, a Riesz endomorphism
which is not power compact and a power compact endomorphism which is
not compact.

\begin{example}
A theorem of Beurling and Helson \cite[Theorem 4.5 and exercise 4.12]{Katznelson} tells us that every non-zero
 endomorphism of the group algebra
$L^1(\R)$ is an automorphism. Thus, for this commutative semisimple Banach
algebra, there are no non-zero quasicompact endomorphisms at all.
\end{example}

For the next example, the proof is based on our results concerning the powers of quasicompact endomorphisms.

\begin{example}
Let $A$ be the Banach algebra $Ea[-1,1]$ described in
\cite{BD} and \cite{KW},  and defined as follows.
Let $M(\C)$ denote the set of finite regular Borel
    measures on $\C$ and $M^\omega(\C)$ the set of measures $\mu \in
    M(\C)$ for which $\int_\C e^{|Re \lambda|}\,\rd|\mu|(\lambda) < \infty.$
    For each $\mu \in M^\omega(\C)$, we may define a continuous function $f_\mu:[-1,1]\rightarrow \C$ by
    $f_\mu(x)=\int_\C e^{x\lambda}\,\rd\mu(\lambda)$ ($x \in [-1,1]$).
    Then
    \[
    Ea[-1,1] = \{f_\mu: \mu \in M^\omega(\C)\}\,.
    \]
    With norm defined by

    \[
    \|f\|_A=\inf\left\{\int_\C e^{|Re \lambda|}\,\rd|\mu|(\lambda): \mu \in M^\omega(\C) \text{ with }
    f_\mu=f\,\right\}\,,
    \]
      $A=Ea[-1,1]$  is a regular
    commutative semisimple Banach algebra
    \cite{BD,Sin}.
    Further $\Phi_A$ is $[-1,1]$. This
    algebra is called the \textit{extremal algebra} for $[-1,1]$, a name
    derived from a property it possesses relative to the study of
    numerical ranges of elements in complex unital Banach
    algebras. The Banach algebra $A$ is generated by the Hermitian element $u$,
    where $u(x)=x$ for $x \in [-1,1]$. Also $\|e^{itu}\|_A=1$
    for all real $t$ \cite{BD,Sin}.

Let $T$ be an endomorphism of $A$, and let $\phi$ be the associated selfmap of $[-1,1]$.
In \cite{KW} it was shown that $\phi$ must have the form $x \mapsto \alpha x+\beta$, where $\alpha$ and $\beta$ are real numbers with $|\alpha| + |\beta| \leq 1$. If $\beta = 0$ and $|\alpha| = 1$, then $T$ is an automorphism, while if
$\alpha=0$, then the endomorphism $T$ has rank one, and so $T$ is compact.
Also it was shown in \cite{KW} that
the rank-one endomorphisms are the only nonzero compact endomorphisms of $A$.
We claim that every quasicompact endomorphism of
$A$ is compact.

To see this, let $T$ be a quasicompact endomorphism of
$A$, with associated selfmap $\phi$. For some $\alpha$ and $\beta$ as above, we have
$\phi(x) = \alpha x+\beta$. Since $T$ is not an automorphism,  we have $\alpha \neq 1$.
Set $x_0=\frac{\beta}{1-\alpha}$, so that $x_0$ is the fixed point of $\phi$.
Let
$S$ be the rank-one endomorphism $f \mapsto f(x_0)1$. Since $T$ is quasicompact, Corollary \ref{useful-corollary} implies
that $\|T^n-S\| \rightarrow 0.$
We shall show that $\alpha = 0$, and hence that $x_0=\beta$ and $T=S$.

Suppose, towards a contradiction, that $\alpha \neq 0$.
For each $n \in \N$, define $f_n \in A$ by
\[
f_n(x)=\left(\frac{1 + \exp(i(\alpha^{-n}(x -x_0)))}{2}\right)^n\,.
\]
As mentioned above, for each real number $t$, the function $x \mapsto e^{i t x}$ has norm $1$ in $A$. Thus we have
\[
1=\|f_n\|_\infty \leq \|f_n\|_{A} \leq 1\,,\]
and so $\|f_n\|_{A} = 1.$

It is routine to show that for each positive integer $n$,
$T^nf(x)=f(\alpha^n(x-x_0)+x_0)$ and so
\[
T^n f_n(x)=\left(\frac{1+e^{i(x-x_0)}}{2}\right)^n\,.
\]
We also note that
$Sf_n(x)=f_n(x_0)=1$.
Now
\[
\|f_n\|_A\|T^n-S\| \geq \|(T^n-S)f_n\|_A \geq |(T^n-S)f_n(x)|
\]
for all $x$ in $[-1,1]$.
Evaluate at some $x_1 \neq x_0.$ Then
\[
\|T^n - S\|=\|f_n\|_A\|T^n-S\| \geq \|(T^n-S)f_n\| \geq |(T^n-S)f_n(x_1)|\,.
\]
This
implies that
\[
\|T^n-S\| \geq \left| \left(\frac{1+e^{i(x_1-x_0)}}{2}\right)^n - 1\right| \geq 1/2
\]
for large $n$. Therefore $\|T^n-S\|$ does not converge to $0$,
and so
$T$ is not quasicompact according to Corollary \ref{useful-corollary} (or Proposition \ref{ssqc}).
This contradiction shows that $\alpha=0$, and so $T=S$.

Therefore for this algebra, every
quasicompact
endomorphism is compact.
\end{example}

\section*{Acknowledgements}
We would like to thank the referee for useful comments.
We would also like to thank Garth Dales and Sandy Grabiner for some very helpful discussions.
Our discussions with Sandy Grabiner took place at the 19$^\mathrm{th}$
International Conference on Banach Algebras held at B\c{e}dlewo, July 14--24,
2009. The support for the meeting by the Polish Academy of Sciences, the
European Science Foundation under the ESF-EMS-ERCOM partnership, and the
Faculty of Mathematics and Computer Science of the Adam Mickiewicz University
at Pozna\'n is gratefully acknowledged.
In addition to the support for both authors from the Conference's funds, the first author's attendance at this meeting was further supported by a grant from the Royal Society.

\end{document}